\newtheorem{thm}{Theorem}[section]
\newcommand{\argmax}{\mathop{\mbox{\rm arg\,max}}}
\newcommand{\UCB}{\mathop{\mbox{\footnotesize UCB1}}}
\newcommand{\UCT}{\mathop{\mbox{\footnotesize UCT}}}
\begin{document}
\sloppy

\title{On the Convergence Rate of MCTS for the Optimal Value Estimation in Markov Decision Processes}
\author{Hyeong Soo Chang
\thanks{H.S. Chang is with the Department of Computer Science and Engineering 
at Sogang University, Seoul 121-742, Korea. (e-mail:hschang@sogang.ac.kr).}%
}

\maketitle
\begin{abstract}
A recent theoretical analysis of a Monte-Carlo tree search (MCTS) method properly modified from the ``upper confidence bound applied to trees" (UCT) algorithm established a surprising result, due to a great deal of empirical successes reported from heuristic usage
of UCT with relevant adjustments for various problem domains in the literature, that its rate of convergence of the expected absolute error to zero is $O(1/\sqrt{n})$ in estimating the optimal value at an initial state in a finite-horizon Markov decision process (MDP), where $n$ is the number of simulations.
We strengthen this dispiriting slow convergence result by arguing within a simpler algorithmic framework in the perspective of MDP, apart from the usual MCTS description, that the simpler strategy, called ``upper confidence bound 1" (UCB1) for multi-armed bandit problems, when employed as an instance of MCTS by setting UCB1's arm set to be the policy set of the underlying MDP, has an asymptotically faster convergence-rate of $O(\ln n / n)$.
We also point out that the UCT-based MCTS in general has the time and space complexities that depend on the size of the state space in the worst case, which contradicts the original design spirit of MCTS.
Unless heuristically used, UCT-based MCTS has yet to have theoretical supports for its applicabilities.
\end{abstract}

\begin{keywords}
Monte-Carlo tree search, UCT, Markov decision process, multi-armed bandit, UCB1
\end{keywords}

\section{Introduction}

Markov decision process (MDP), also known as stochastic dynamic programming, is a fundamental model 
for solving sequential decision making problems under uncertainty, developed by Bellman, 
(see, e.g.,~\cite{bert2}~\cite{changbook}~\cite{pow} and references therein) to generally 
maximize or minimize the expected total accumulated reward or cost over a finite or infinite horizon
with or without discounting.
The problem is to obtain the optimal value at each state and/or an optimal action to take
at each state over time, i.e., an optimal policy, which achieves the optimal value 
at each state if the policy is followed.

Policy iteration, value iteration, linear programming (for infinite horizon MDPs), and 
backward induction (for finite horizon MDPs) are the well-known \emph{exact} algorithms for the problem.
Unfortunately, MDP suffers from the \emph{the curse of dimensionality} in that the complexity of modelling 
and the complexity of the exact methods can grow exponentially in the size of the problem.
There exists a great body of the literature about computational (heuristic) methodologies for providing 
``approximate" solutions to MDPs while addressing the scalability issue 
(see, e,g.,~\cite{changbook},~\cite{pow},~\cite{bert2} and references therein).
Most approaches have focused on being \emph{off-line} in that we compute an (approximate) optimal policy \emph{in advance} and 
then apply the control law to the underlying system. This off-line approach still has potentially a high complexity 
in solving large MDPs.

Suppose that we have decision-making situation where the controller needs to acts over the trajectory 
of the visited states as the system evolves over time. In this situation, rather than computing the 
optimal value at every state in advance, we would better solve only the (finite-horizon) subproblem at hand of 
obtaining a near-optimal value (or a near-optimal action to take) \emph{at the current state only}.
In such \emph{on-line} setting (see, e.g,~\cite{changbook},~\cite{bert}), estimating the optimal 
value at the current state for a finite horizon is the crux of the solution process to obtain 
a near-optimal action but doing so needs to still face with the dimensionality issue.
In a breakthrough work by Chang \emph{et al.}~\cite{changams}, an algorithm called ``adaptive multi-stage sampling" (AMS)
was presented, which addresses this issue.
The basic idea is, when the action space is relatively small, to use a 
\emph{random sampling} for the next-state transition to approximate the expectation over the whole set of the 
reachable states by a sample average over the sampled next-states. 
The key
question is then how to select an action for sampling a next-state and to control the size 
of the number of the sampled next-states while guaranteeing a convergence to the optimal value 
when the number of samples is sufficiently large.

Specifically, given a sampled state (including the current root state), AMS selects $N$ possibly 
different actions over $N$ steps,
where this selection process adopts the idea of an exploration and exploitation process of
choosing the arms to be played~\cite{auer}~\cite{lai} for stochastic multi-armed bandit (MAB) problems
to minimize the criterion of the ``expected regret". An arm with the maximum index-value is chosen at each
step, where the index-value of an arm at a particular step is given with 
the sample average over the intermediate steps at which the arm had been played up to the step plus a term related
with upper confidence bound (UCB) of the sample average estimate.
In AMS, every time a particular action is selected, AMS samples a next-state from the given next-state
probability distribution associated with the action and the underlying sampled state at which the action was chosen.
The process of sampling a next-state (by selecting an action) is recursively done from the 
initial stage to the final stage in a manner of the depth-first search, starting from 
the current (root) state. AMS in a sense emulates backward induction over the sampled states.
When a recursive call made from a state is returned to the state, the estimate of the 
optimal value at the state is updated based on Bellman's optimality equation.
Due to the sampling process like the depth-first search, AMS follows a search path in $O((|A|N)^H)$-size
tree made of the sampled states if $A$ denotes the action set and $H$ is the horizon size.
Thus, the time-complexity of AMS is $O((|A|N)^H)$ but \emph{independent of the state space size} while
the estimate converges to the optimal value at the root state as $N$ increases and 
the convergence rate is $O(H \ln N/ N)$~\cite{changams}.

Inspired from AMS, Coulom studied a simulation-based approach in the context of ``planning" with the model of MDP and coined the name of the approach as ``Monte-Carlo tree search" (MCTS)~\cite{coulom}.
Kocsis and Szepesv\'{a}ri~\cite{kocsis} presented a more elaborated algorithm, called ``upper confidence bound applied to Trees" (UCT), as an instance of MCTS. It should be clarified that UCT was originally designed for solving finite-horizon MDPs (or solving approximately 
infinite-horizon discounted MDPs) with the goal of handling with MDPs that have large state spaces.
Since then, much attention has been paid to MCTS in various problem domains (see, e.g.,~\cite{browne}~\cite{shah}~\cite{bert}~\cite{macie} and the references therein) and a great deal of \emph{empirical} successes have been reported from heuristic usage of UCT with relevant adjustments in the literature and UCT has been widely considered as a standard algorithm when implementing MCTS.
The main characteristics 
of UCT is using a similar UCB method, as in AMS, but to ``build" or generate a next \emph{policy} to ``roll out" or to simulate from the past simulation results
(whereas AMS uses the UCB method to select an action that is used for sampling a next-state when going 
deeper in the depth-first tree search) and at the same time 
updating a tree made of the visited states by rolling out the policies generated for a bookkeeping process. 
The average of the accumulated 
sample-reward sums over a finite horizon, starting at the current root state, obtained by rolling out the policies generated by UCT 
is the estimate of the optimal value at the state for the horizon. It should be noted that Bellman's optimality principle is \emph{not} incorporated into the update process but the non-recursive 
process makes its sample complexity polynomially dependent on $H$ unlike the exponential dependence on $H$ in AMS.

Even with such a popularity of UCT-based MCTS, there has been no rigorous theoretical work about the convergence 
behavior of UCT or MCTS in general until Shah \emph{et al.}'s performance analysis~\cite{shah}
on a properly modified UCT. 
Throughout the note, we refer to the algorithm by Shah \emph{el al.} as UCT-C (UCT-corrected) because
they resolved (by correcting UCT) a critical issue regarding the convergence rate in achieving 
the asymptotic optimality (cf., Section~\ref{sec:UCT-C}).
Shah \emph{et al.} observed that the originally claimed result of $O(\ln n/n)$-rate of convergence to zero (or the upper bound $O(\ln n/n)$ on the absolute error) in~\cite{kocsis} is erroneous and established that UCT-C's rate of convergence to zero is $O(1/\sqrt{n})$ where $n$ is the number of the simulations, i.e., the number of the policies generated for simulations.
Since then, it is difficult to find any work or correspondence in the literature that treats about \emph{implication} of this result in the perspective of solution methodology of MDPs even if MCTS recently has been one of the highly appealing (control and optimization) topics (see, e.g.,~\cite{bert}).

We strengthen this dispiriting slow-convergence result by arguing within the simpler algorithmic framework, rather than the usual MCTS description (see, e.g.,~\cite{browne}), that 
the much simpler strategy for multi-armed bandit problems, called ``upper confidence bound 1" (UCB1) by Auer \emph{et al.}~\cite{auer}, when employed as an instance of MCTS by setting the arm set to be the policy set of the underlying MDP, has an asymptotically faster convergence-rate of $O(\ln n / n)$.
We also point out that MCTS in general has the time and space complexities that depend on the size of the state space, which contradicts the original design spirit of MCTS. It is supposed to overcome the curse of dimensionality problem, possibly at least no worse than AMS.
Unless heuristically used, UCT-based MCTS has yet to have theoretical supports for its applicabilities.

This note is organized as follows. In Section II, we describe the setting of finite-horizon MDPs,
and state the problem, and provides an algorithmic framework within which the algorithms considered in this note are explained as instances.
Section II describes UCB1 and its asymptotic performance and Section III describes UCT and UCT-C and compares UCT-C's asymptotic performance with UCB1's for deterministic MDPs.
In Section IV, we point out the limitations of UCT and UCT-C for stochastic MDPs in the perspective 
of the complexity and the asymptotic performance. We conclude the note in Section V.

\section{Setup and Problem Statement}

We consider a finite-horizon MDP with a finite state-set $X$ and a finite action-set $A$.
By taking an action $a$ in $A$ at a state $x$ in $X$, the state makes a transition
to $y$ in $X$ by the probability of $P_{xy}^a$ and a reward sample of $R(x,a)$ is
obtained, where $R(x,a)$ is a random variable associated with each pair of $(x,a)$
whose distribution is possibly unknown. 
We assume that the range of $R(x,a)$ is $[0,1]$ for any $x$ and $a$.

For $h\geq 1$, we define an $h$-horizon policy $\{\pi_t, t=0,...,h-1\}$ as a finite sequence of mappings of length $h$ where $\pi_t: X\rightarrow A$.
Let $\Pi_h$ be the set of all possible $h$-horizon policies.
Given $\pi \in \Pi_h$, 
define a random variable $X^{\pi}_t$ that denotes the state at time or level $t$ by following $\pi$,
where a random transition from $x$ at $t$ to $y$ at $t+1$ is made according to the probability of $P_{xy}^{\pi_t(x)}$.
When we \emph{roll out} or \emph{simulate} $\pi$ over $h$-transitions starting from $x$, it means 
that we follow $\pi$ over $h$-transitions given that $X_0^{\pi}=x$, creating a single sample-path.

Assume that a discounting factor $\gamma$ is fixed in $(0,1]$.
Let a random variable, $S^{\pi}_h, \pi\in \Pi_h$, be given
such that
\[
 S^{\pi}_h = \sum_{t=0}^{h-1} \gamma^{t} R \Bigl (X_t^{\pi},\pi_t(X_t^{\pi}) \Bigr ).
\] Define the \emph{value of rolling out} $\pi$ \emph{over} $h$-\emph{horizon} \emph{at} $x$ in $X$
as the conditional expectation $V^{\pi}_h(x) := E[S^{\pi}_h| X_0^{\pi}=x]$.

The problem is to find the \emph{optimal value at} $x$ in $X$ for a given horizon $H\geq 1$
defined as
\[ V^*_H(x) := \max_{\pi \in \Pi_H} V^{\pi}_H(x).\]
or to obtain an optimal policy $\pi^* \in \arg\max_{\pi \in \Pi_H} V^{\pi}_H(x)$ for all $x\in X.$ 
Throughout the note, we fix an initial state $x$ for the horizon $H$.

As a general approach to the MDP problem, we describe MCTS as the following algorithmic framework:
An algorithm (as an instance of MCTS) generates a sequence of the $H$-horizon policies $\{\rho^n, n\geq 1\}$, where $\rho^n\in \Pi_H$ is obtained from so-called ``\emph{index-function}" of the algorithm at $n$, and simulated over $H$-transitions starting at $x$ obtaining a sample of $S^{\rho^n}_H|X^{\rho^n}_0 = x$.
The algorithm outputs
\[
   \frac{1}{n} \sum_{k=1}^{n} S^{\rho^k}_H \Bigl | X_0^{\rho^k} = x
\] as an estimate of $V^*_H(x)$ at $n$.
The index-function at a given step is defined in general over $X\times A\times \{0,...,H-1\}$ and measures the utility of selecting (or sampling) an action at a state at a level.

The performance criterion of the algorithm is given by the expected absolute error of
\begin{equation}
   \Biggl | V^*_H(x) - E \Bigl [\frac{1}{n} \sum_{k=1}^{n} S^{\rho^k}_H \Bigr | X_0^{\rho^k} = x \Bigr ] \Biggr |
\end{equation} and the algorithm is referred to be \emph{asymptotically optimal} if the error goes to zero as $n\rightarrow \infty$.

\section{UCB1}

This section describes UCB1 as an instance of MCTS in our framework and provides its performance.
In the sequel, $[E]$ denotes the indicator function with the event $E$ inside Iverson brackets. If $E$ is true, $[E]=1$ and 0 otherwise. Let $\{\mu^n, n\geq 1\}$ be the sequence of the policies generated by UCB1.

Let $T^n_{\UCB}(\pi) = \sum_{k=1}^{n} [\mu^k=\pi]$ which denotes the number of times $\pi$ in $\Pi_H$ was chosen and simulated up to the step $n \geq 1$. For $\pi\in \Pi_H$ such that $T^n_{\UCB}(\pi)\neq 0$, $x\in X$, and $n\geq 1$, let
\begin{equation}
\label{ucbsampavg}
   Q^n_{\UCB}(\pi,x) = \frac{1}{T^n_{\UCB}(\pi)} \sum_{k=1}^{n} [\mu^k = \pi] S^{\pi}_{H} \Bigl | X_0^{\pi} = x.
\end{equation}  Thus, $Q^n_{\UCB}(\pi,x)$ is the sample average of $S^{\pi}_{H} \Bigl | X_0^{\pi} = x$ over the time steps at which $\pi$ was selected.
Note that the samples of $S^{\mu^n}_H | X_0^{\mu^n}=x$ and $S^{\mu^{n'}}_H | X_0^{\mu^{n'}}=x$ are independently generated and the distributions of $S^{\mu^k}_H | X_0^{\mu^k}=x$ and $S^{\mu^{k'}}_H | X_0^{\mu^{k'}}=x$ for any $k$ and $k'$ are \emph{same} if $\mu^k=\mu^{k'}$. It follows that
$E[Q^n_{\UCB}(\pi,x)] = E[Q^{n'}_{\UCB}(\pi,x)] = E[S^{\pi}_{H}| X_0^{\pi} = x]$ for any $n$ and $n'$.
Borrowing the term used in Shah \emph{et al.}~\cite{shah}, the MAB process that sequentially selects a policy (arm) and obtaining a random sample over time is \emph{stationary}.

For a given $n > 1$, let $\mathcal{I}^n_{\UCB}$ be a function over $\Pi_H\times X$ such that for 
$\pi\in \Pi_H$ and $x\in X$,
\begin{equation}
   \mathcal{I}^n_{\UCB}(\pi,x) = \begin{cases}
   Q^n_{\UCB}(\pi,x) + \sqrt{\frac{2\ln n}{T^n_{\UCB}(\pi)}} & \text{if } T^n_{\UCB}(\pi) \neq 0. \\
   \mathcal{I}_{\max} & \text{otherwise},
\end{cases}
\end{equation} where $\mathcal{I}_{\max}$ is set to be any constant bigger than $H + \sqrt{2\ln n}$. 

Due to the assumption that $\max_{y\in X, a\in A} R(y,a)\in [0,1]$, the maximum value that $\mathcal{I}^n_{\UCB}(\pi,x)$ can take is less than equal to $H + \sqrt{2\ln n}$ over all $x$ in $X$ and all $\pi\in \Pi_H$ such that $T^n_{\UCB}(\pi) \neq 0$.
The reason for introducing $\mathcal{I}_{\max}$ is that we enforce any of the policies that have \emph{not} been selected up to $n$ to have the equal priority of being chosen at $n+1$ but higher than any policy selected before.
In other words, this ensures that each policy in $\Pi_H$ is selected at least once.
We remark that $\mathcal{I}^n_{\UCB}(\pi,x)$ is actually the original index-function of UCB1 in~\cite{auer} defined in a slightly different form by introducing $\mathcal{I}_{\max}$ and this modification does not change the original functionality of UCB1. 
This definition just incorporates the condition that each arm is played at least once,
which the original description of the algorithm has, into the index-function $\mathcal{I}^n_{\UCB}$.

For the MAB problem~\cite{auer}, at each $n \geq 1$, UCB1 simply chooses a policy that achieves the maximum index-value $\max_{\pi\in \Pi_H} \mathcal{I}^n_{\UCB}(\pi,x)$ (with the ties broken arbitrarily) 
for playing the policy at $n+1$ with an arbitrarily chosen initial policy at $n=1$.
It is well known that UCB1 acts as a benchmark strategy for solving the MAB problems when the performance
is measured by ``the expected regret". In particular, UCB1 utilizes 
the upper confidence bound of the sample mean and the term related with the bound in the index-function 
plays an important role in \emph{exponentially} bounding the expected regret relative to the optimal value 
in probability~\cite{auer}.

To fit UCB1 into an algorithm in our framework, we simply adapt $\mathcal{I}^n$ into an index-function $I^n_{\UCB}$ defined over $X\times A\times \{0,1,...,H-1\}$ at $n > 1$. 
Note again that an initial state is fixed by $x$ for the horizon $H$. (That is, the underlying problem is obtaining $V^*_H(x)$.) $I^n_{\UCB}$ is simply given as follows: For $y\in X$, $a\in A$, and $l \in \{0,1,...,H-1\}$,
\begin{equation}
\label{indexUCB}
I^n_{\UCB}(y,a,l) = \begin{cases} 
\mathcal{I}^n_{\UCB}(\pi^n,x) & \text{if } a = \pi^n_l(y), \text{where } \pi^n \in \argmax_{\pi\in \Pi_H} \mathcal{I}^n_{\UCB}(\pi,x) \\
- \mathcal{I}^n_{\UCB}(\pi^n,x) & \text{otherwise},
\end{cases} 
\end{equation}
Then UCB1 starts with an arbitrary policy $\mu^1 \in \Pi_H$ and generates $\mu^{n}$ for $n > 1$ to roll out such that for $y\in X$ and $l\in \{0,1,...,H-1\}$,
\begin{equation}
\label{phiUCB}
  \mu^{n}_l(y)  \in \argmax_{a\in A} I^{n-1}_{\UCB}(y,a,l).
\end{equation} We can see that $\mu^{n} = \pi^n$ for all $n > 1$ as desired.

We write the result obtained by Auer \emph{et al.} as a theorem below in our terms.
\begin{thm}~\cite[Theorem 1]{auer}
\label{thm:ucb1}
Let $\{\mu^n, n\geq 1\}$ be the sequence of the policies in $\Pi_H$ generated by UCB1. 
For any $n\geq |\Pi_H|$ and $x$ in $X$,
\begin{equation}
\label{ucb1}
   0\leq V^*_H(x) - E\left [\frac{1}{n} \sum_{k=1}^{n} S^{\mu^k}_H \biggr | X_0^{\mu^k} = x \right] 
\leq O\Bigl ( \sum_{\pi: \Delta_{\pi}>0 } \frac{1}{\Delta_{\pi}} \frac{\ln n}{n} \Bigr ), 
\end{equation} where $\Delta_{\pi} := V^*_H(x) - V^{\pi}_H(x)$.
\end{thm}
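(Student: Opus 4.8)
The plan is to recognize the bracketed quantity in~\eqref{ucb1} as the time-normalized regret of UCB1 run on the multi-armed bandit whose arm set is $\Pi_H$ and whose reward for pulling arm $\pi$ is a fresh sample of $S^{\pi}_H \mid X_0^{\pi}=x$, and then to reproduce the regret analysis of Auer \emph{et al.}~\cite{auer} in this setting. The lower bound $0\le V^*_H(x)-E[\cdot]$ is immediate: for each $k$, conditioning on the realized policy $\mu^k\in\Pi_H$ gives $E[S^{\mu^k}_H\mid X_0^{\mu^k}=x]=E[V^{\mu^k}_H(x)]\le V^*_H(x)$, so the average of these terms is at most $V^*_H(x)$. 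For the upper bound I would introduce the expected regret after $n$ rounds,
\[
\mathrm{Reg}_n \;:=\; nV^*_H(x)-E\Bigl[\textstyle\sum_{k=1}^n S^{\mu^k}_H\Bigm| X_0^{\mu^k}=x\Bigr],
\]
and rewrite it, using $\sum_{\pi\in\Pi_H}T^n_{\UCB}(\pi)=n$ together with the stationarity remark (so that each of the $T^n_{\UCB}(\pi)$ pulls of $\pi$ contributes mean $V^{\pi}_H(x)$), as $\mathrm{Reg}_n=\sum_{\pi:\Delta_\pi>0}\Delta_\pi\,E[T^n_{\UCB}(\pi)]$. The estimate in~\eqref{ucb1} is then exactly $\mathrm{Reg}_n/n$, so it suffices to show $E[T^n_{\UCB}(\pi)]=O\bigl((\ln n)/\Delta_\pi^2\bigr)$ for every suboptimal $\pi$.

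For that per-arm bound I would follow the classical UCB1 argument. Fix a suboptimal $\pi$, let $\pi^\star$ denote an optimal policy, and set $\ell_\pi=\lceil c\,(\ln n)/\Delta_\pi^2\rceil$ for a constant $c$ chosen large enough (its value depends on the reward scale; see below). Arm $\pi$ is selected at round $m+1$ only when $\mathcal{I}^m_{\UCB}(\pi,x)\ge\mathcal{I}^m_{\UCB}(\pi^\star,x)$; if in addition $T^m_{\UCB}(\pi)\ge\ell_\pi$, then the confidence radius $\sqrt{2\ln m/T^m_{\UCB}(\pi)}$ is at most $\tfrac12\Delta_\pi$, and a short manipulation of the index inequality forces at least one of the events $Q^m_{\UCB}(\pi^\star,x)\le V^*_H(x)-\sqrt{2\ln m/T^m_{\UCB}(\pi^\star)}$ and $Q^m_{\UCB}(\pi,x)\ge V^{\pi}_H(x)+\sqrt{2\ln m/T^m_{\UCB}(\pi)}$. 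Taking a union bound over the at most $m$ admissible values of each of the two play-counts and applying Hoeffding's inequality to the corresponding i.i.d.\ sample averages bounds the probability of either event by $O(m^{-3})$ at round $m$; summing over $m\le n$ contributes only an additive constant. Hence $E[T^n_{\UCB}(\pi)]\le\ell_\pi+O(1)=O\bigl((\ln n)/\Delta_\pi^2\bigr)$. Substituting into $\mathrm{Reg}_n=\sum_{\pi:\Delta_\pi>0}\Delta_\pi\,E[T^n_{\UCB}(\pi)]$ and dividing by $n$ yields~\eqref{ucb1}.

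The only step needing genuine care --- and the one I expect to be the main obstacle --- is the concentration argument, where two points must be watched. First, the play-counts $T^m_{\UCB}(\pi)$ and $T^m_{\UCB}(\pi^\star)$ are themselves random and entangled with the samples, so one cannot apply Hoeffding to ``the'' empirical mean directly; the standard remedy, which I would use, is the union bound over all admissible values of the counts. Second, the samples $S^{\pi}_H$ take values in $[0,H]$ rather than $[0,1]$, so the reward scale must be carried through the Hoeffding exponents (equivalently, one normalizes all rewards by $1/H$ and rescales the confidence radius accordingly); this changes the constant $c$ and the constants hidden in the $O(\cdot)$ but leaves the $(\ln n)/\Delta_\pi^2$ order intact, so the final bound is $O\bigl(\sum_{\pi:\Delta_\pi>0}\Delta_\pi^{-1}(\ln n)/n\bigr)$ as claimed. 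Everything else is the verbatim transfer of~\cite[Theorem 1]{auer} to the arm set $\Pi_H$, which is legitimate precisely because the induced bandit is stationary with samples drawn independently across rounds, as noted just before the statement.
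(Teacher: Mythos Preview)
Your proposal is correct and matches the paper's approach: the paper does not give its own proof of this theorem but simply cites it as~\cite[Theorem 1]{auer}, and what you have written is precisely a faithful sketch of Auer \emph{et al.}'s original UCB1 regret argument specialized to the arm set $\Pi_H$, including the correct observation that the reward range $[0,H]$ only affects constants inside the $O(\cdot)$.
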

The bound expressed with the big-$O$ notation can be further simplified by
\[
O \Bigl (\frac{|\Pi_H|}{\Delta_{\min}}\frac{\ln n}{n} \Bigr ) = O \Bigl (\frac{|A|^{|X|H}}{\Delta_{\min}}\frac{\ln n}{n} \Bigr ),
\]
where $\Delta_{\min} := \min_{\pi: V^{\pi}_H(x) < V^*_H(x)} \Delta_{\pi}$.

The upper bound holds regardless of the stochasticity of MDPs. No assumption
that an optimal policy is unique needs to be imposed. (On the contrary,
the unique existence assumption needs to be imposed to have the meaningful bound
presented by Shah \emph{et al.}~\cite{shah}.)
Significantly, Lai and Robbins~\cite{lai} showed that the upper bound of $O(\ln n/n)$ is 
``asymptotically optimal" or tight in that 
if the reward distribution associated with each arm satisfies some mild assumptions,
then
for \emph{any} algorithm that produces
a sequence of the arms to be played, the expected number of times any non-optimal 
arm $a$ has been played up to the step $n$ is lower bounded by 
$\ln n/n$ divided by the KL-distance between the reward distribution of $a$ and 
of an optimal arm if $n$ is sufficiently large. Therefore this result also applies to the
MCTS case. Any MCTS algorithm in our framework must achieve $O(\ln n/n)$-bound 
on the expected absolute error to be asymptotically optimal in the sense of
Lai and Robbin's result.

\section{UCT and UCT-C}
\subsection{UCT}

Let $\{\phi^n, n \geq 1\}$ be the sequence of policies generated by UCT. The initial state for the horizon $H$ is fixed with $x$.
Let the sample of the reward-sum obtained by rolling out $\phi^n$ over $(H-l)$-transitions starting from $y$ in $X$ at level $l$ in $\{0,1,...,H-1\}$ be $S^{\phi^n}_{H}(y,l)$ such that
\[S^{\phi^n}_{H}(y,l)  = \sum_{t=l}^{H-1} \gamma^{t} R \Bigl (X_t^{\phi^n},\phi^n_t(X_t^{\phi^n})\Bigr ) \Bigl | X_l^{\phi^n} = y.
\]
Let $T^n(y,l) = \sum_{k=1}^{n} [X_l^{\phi^k}=y]$, which denotes the number of times $y$ in $X$
was visited at level $l\in \{0,1,...,H-1\}$ up to step $n\geq 1$ by simulating the policies $\phi^1$,...,$\phi^n$.
For $y\in X$, $l \in \{0,1,...,H-1\}$, and $n \geq 1$, define
\begin{equation}
 V^n(y,l) = \begin{cases} 
 \frac{1}{T^n(y,l)} \sum_{k=1}^{n} S^{\phi^k}_{H}(y,l) [X_l^{\phi^k}=y] & \text{if } T^n(y,l) \neq 0 \\
 V_{\max} & \text{otherwise},
 \end{cases}
\end{equation} where $V_{\max}$ is set to be an arbitrary constant bigger than $H$
because the maximum value that $S^{\phi^k}_{H}(y,l)$ can take is less than equal to $H$.

Let also $T^n(y,a,l) = \sum_{k=1}^{n} [X_l^{\phi^k}=y,\phi^k_l(y)=a]$ that denotes the number of times $y$ was visited at level $l$ \emph{and} $a$ was taken at the visited $y$ up to the step $n$. For $y\in X$, $a\in A$, and $l \in \{0,1,...,H-1\}$ define
\begin{equation}
\label{QnUCT}
   Q^n(y,a,l) = \begin{cases}
   \frac{1}{T^n(y,a,l)} \sum_{k=1}^{n} S^{\phi^k}_{H}(y,l) [X_l^{\phi^k}=y, \phi^k_l(y)=a] & \text{if } T^n(y,a,l) \neq 0 \\
   Q_{\max} & \text{otherwise}
   \end{cases}
\end{equation} where $Q_{\max}$ is set to be an arbitrary constant bigger than $H$.
It can be easily seen that when $T^n(y,l)\neq 0$, we can rewrite
\[
       V^n(y,l) = \frac{1}{T^n(y,l)} \sum_{a\in A, T^n(y,a,l) \neq 0} T^n(y,a,l) Q^n(y,a,l).
\]

For $y\in X$, $a\in A$, and $l \in \{0,1,...,H-1\}$, define the index-function $I^n_{\UCT}$ of
UCT at $n \geq 1$ as
\begin{equation}
\label{indexUCT}
I^n_{\UCT}(y,a,l) =  \begin{cases}
     Q^n(y,a,l) + \sqrt{\frac{2\ln T^n(y,l)}{T^n(y,a,l)}} & \text{if } T^n(x,a,l) \neq 0 \\
     I_{\UCT}^{\max} & \text{otherwise},
\end{cases} 
\end{equation} 
where $I_{\UCT}^{\max}$ is set to be any constant bigger than $Q_{\max}+\sqrt{2\ln n}$.
Then UCT generates $\phi^n, n\geq 1$ to roll out as follows:
For $l \in \{0,1,...,H-1\}$ and $y\in X$,
\begin{equation}
  \label{phiUCT}
  \phi^{n}_l(y)  \in  \argmax_{a\in A} I^n_{\UCT}(y,a,l).
\end{equation} 

Observe first that in comparison with the case of UCB1, for any given $n$ and $n'$ with $n\neq n'$, $E[Q^n(y,a,l)] \neq E[Q^{n'}(y,a,l)]$ in general for any $y$ and $l$. 
This can be seen because the value of $Q^n(y,a,l)$ depends on the sequence $\{\phi^1,...,\phi^n\}$ but on the other hand, the value of $Q^{n'}(y,a,l)$ does on $\{\phi^1,...,\phi^{n'}\}$.
That is, the expected utilities of taking an action $a$ at $y$ at time $n$ and $n'$ are \emph{different}.
The MAB process induced by UCT, associated with $y$ at level $l$, is \emph{non-stationary} in that the reward distributions of the arms are time-varying. This necessarily affects the convergence behavior of UCT.
Indeed, it turns out that non-stationarity together with the logarithmic UCB-like term of the index-function of UCT makes 
it difficult to draw an exponential concentration in probability~\cite{shah}.
The very correction of Shah \emph{et al.}'s to UCT is thus to the UCB-like term 
to make the sequence of the expectations does converge but without preserving the 
exponential concentration in probability.

It should be also noted that the output $V^n(x,H)$ of UCT for the initial state $x$ is \emph{not} the usual process of Monte-Carlo simulation because the average value in $V^n$ (and $Q^n$) is computed over $n$ random sample-values of $n$ possibly \emph{different} random variables. The law of large numbers does not apply here.
Arguably, it is misleading to put the term ``Monte-Carlo" in front of tree search even though MCTS was coined with a different simulation method from UCT.
In a survey paper by Brown \emph{et al.}~\cite{browne}, for a general description of UCT-based MCTS, Monte-Carlo is referred to as the ``generality of random sampling".
Fixing a policy in finite-horizon MDP induces an inhomogeneous Markov chain and rolling out a policy is similar to a \emph{random walk over the chain}.
With this view, the Monte-Carlo part in MCTS probably corresponds to simulating policies.

Because of the average term of the samples from a non-stationary MAB process,
we see that the index-function of UCT given~(\ref{indexUCT}) follows only the \emph{form} of UCB1's.
The term $\sqrt{\frac{2\ln T^n(x,l)}{T^n(x,a,l)}}$ does not necessarily play the role of UCB.
Even if UCT stands for ``UCB applied to Trees," there exists a crucial difference between UCB in the index-function of UCT and UCB in UCB1.
This is another aspect of the difficulty of deriving an exponential bound in probability relative to the optimal value unlike the case in UCB1.

Furthermore, no relationship between $V^{n}(y,l)$ and $V^{n}(z,l')'s$ for $l'< l$ and $y,z\in X$
is explored while computing $V^n$. Even if $V^n(y,l)$ is an estimate
of $V^*_l(y)$, computation of $V^n$ \emph{does not} incorporate or approximate the optimal substructure property from the dynamic programming (DP) equations.
In fact, there seems not to exist any ``optimality" substructure between the \emph{non-stationary} MAB process associated with a state $y$ in a level $h-1$ and the \emph{non-stationary} MABs associated with the visitable states $z$'s from $y$ in the level $h$.
It is the convergence behaviour of UCT that is expected to in a way \emph{search for such optimality relations} between the levels as the non-stationary MABs \emph{approach to} stationary MABs.
Indeed, Shah \emph{et al.}'s convergence proof for their UCT-C that does not explore DP relation \emph{is based on} emulating the DP-algorithm of backward induction (or finite-horizon value-iteration).

\subsection{UCT-C}
\label{sec:UCT-C}

Shah \emph{et al.} found a fundamental limitation in the usage of the UCB1-like selection in the index-function for achieving a logarithmic convergence-rate from non-stationarity of the MABs induced in UCT.
They resolved this by modifying the term $\sqrt{\frac{2\ln T^n(x,l)}{T^n(x,a,l)}}$ in the index-function of UCT with a certain \emph{polynomial expression} (called ``polynomial bonus term" in~\cite{shah}) of $T^n(x,l)$ and $T^n(x,a,l)$
and with some (algorithmic) constants to be set for each level that all together ``control" the concentration behaviour of the resulting algorithm in probability. 

Assume that a \emph{deterministic} MDP is given such that any transition from a state is deterministic. That is, for any $\pi\in \Pi_H$, there exists $z\in X$ such that $P_{yz}^{\pi_l(y)}=1$
for any $l\in \{0,...,H-1\}$ and for any $y\in X$.
(Shah \emph{et al.} presented their main results with rigorous analysis only under the deterministic MDP setting and then discussed an idea of extending UCT-C for stochastic MDPs in the appendix.
To compare the convergence rate of UCT-C with UCB1's, we follow the method of their exposition.)

By the modification of UCT into UCT-C, each non-stationary MAB process that occurs during the invocation of UCT-C satisfies the certain properties of
the convergence and the concentration associated with the states visited from the initial root state $x$ (see, Section 5 and 7~\cite{shah}).
The key idea of their analysis is to bound recursively the errors in the levels by emulating the backward 
induction based on Bellman's optimality equation.
Starting with bounding $V^*_0$, $V^*_h$ is bounded from $V^*_{h-1}$ inductively for $h>1$ by
incorporating the convergence results of the associated non-stationary MABs, providing a final bound on $V^*_H$.

Theorem below provides Shah \emph{et al.}'s result with the parts only relevant to our discussion 
with a simplification and a slight modification in our terms.
\begin{thm}~\cite[Theorem 1]{shah}
\label{thm:uct-c}
Assume that a deterministic finite-horizon MDP with an initial state $x\in X$ is given and that an optimal policy that achieves $V^*_H(x)$ is unique. Under some conditions on the parameters of UCT-C, $\{\pi^n, n\geq 1\}$ generated by UCT-C satisfies that
\begin{equation}
\label{boundhidden}
\Bigl |V^*_H(x) - E\Bigl [\frac{1}{n} \sum_{k=1}^{n} S^{\pi^k}_H \Bigr | X_0^{\pi^k} = x \Bigr ] \Bigr | \leq
O\left (\frac{H|A|}{\min_{h}(\Delta_{\min}^{h})^2}\frac{1}{\sqrt{n}} \right ),
\end{equation} where for $h \in \{1,...,H-1\}$, $\Delta_{\min}^{h} = \min_{\pi^h\in \Pi_h}\Delta_{\pi^h}$ with $\Delta_{\pi^h} := \min_{y\in X} (V^*_h(y) - V^{\pi^h}_h(y))$. 
\end{thm}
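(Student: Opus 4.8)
\emph{Proof idea.} The plan is to reconstruct the argument of Shah \emph{et al.} as a backward induction over the horizon that \emph{emulates} finite-horizon value iteration, even though UCT-C itself never invokes Bellman's equation. For a level $h\in\{1,\dots,H\}$ --- counting the horizon-to-go, so that $h=H$ at the root $x$ and $h=1$ at the last decision --- and a state $y$ reachable at that level, write $\widehat V^n_h(y)$ for UCT-C's running value estimate at $y$ after its $n$-th visit, i.e., the analogue of UCT's $V^n$ computed with UCT-C's polynomial ``bonus term'' in place of the logarithmic term of~\eqref{indexUCT}. I would carry along two coupled invariants at each level: (i) a \emph{bias bound} $\bigl|E[\widehat V^n_h(y)]-V^*_h(y)\bigr|\le c_h/\sqrt n$ for all large $n$, and (ii) a \emph{polynomial concentration bound} $\Pr\bigl(|\widehat V^n_h(y)-V^*_h(y)|>\varepsilon\bigr)\le$ (a polynomial in $n$ and $1/\varepsilon$, with level-dependent exponents). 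Determinism of the transitions is what keeps the recursion clean: taking $a$ at $y$ at level $h$ leads to a \emph{fixed} successor $z=z(y,a)$ at level $h-1$, so the reward-to-go of ``arm'' $a$ is exactly $R(y,a)+\gamma\,\widehat V^m_{h-1}(z)$ with $m$ the (random) number of earlier visits to $z$; by invariant (ii) at level $h-1$ this is an arm whose conditional mean drifts toward $Q^*_h(y,a):=R(y,a)+\gamma V^*_{h-1}(z)$ at rate $O(1/\sqrt m)$ and which concentrates polynomially.

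First I would settle the base case $h=1$: each reachable state then induces a genuine \emph{stationary} MAB with bounded rewards $R(y,a)\in[0,1]$, and a pulls-count argument --- with UCT-C's polynomial bonus exponent set so as to keep exploration at the $n^{-1/2}$ scale --- bounds the expected number of plays of any $a$ with $Q^*_1(y,a)<V^*_1(y)$ by $O\bigl(\sqrt n/(\Delta^1_{\min})^2\bigr)$, giving (i) with $c_1=O(|A|/(\Delta^1_{\min})^2)$; and (ii) then follows from a Hoeffding/Azuma estimate for the average of a bounded martingale-difference sequence whose per-arm counts are controlled by that same bonus. For the inductive step I would isolate a self-contained lemma on \emph{non-stationary} MABs: if the $j$-th pull of each arm returns a bounded reward whose conditional mean lies within $O(1/\sqrt j)$ of a fixed target and concentrates polynomially about that mean, then UCB run with UCT-C's polynomial bonus and suitably tuned constants makes the running average of the selected-arm rewards converge to the best target at rate $O(1/\sqrt n)$ and concentrate polynomially, with the minimal positive gap $\Delta^h_{\min}$ entering as a factor $1/(\Delta^h_{\min})^2$ in the bound on how fast suboptimal arms are abandoned. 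Applying this lemma to the arms $\{R(y,a)+\gamma\,\widehat V^m_{h-1}(z(y,a))\}_{a\in A}$ --- whose drift and concentration are precisely invariants (i)--(ii) at level $h-1$ --- closes the induction at level $h$ and propagates the constants along the recursion $c_h\le\gamma\,c_{h-1}+O\bigl(|A|/(\Delta^h_{\min})^2\bigr)$.

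Finally I would unroll at the root. The algorithm's output $\frac1n\sum_{k=1}^n S^{\pi^k}_H\bigm|X_0^{\pi^k}=x$ is exactly $\widehat V^n_H(x)$, so invariant (i) at level $H$ gives $\bigl|V^*_H(x)-E[\widehat V^n_H(x)]\bigr|\le c_H/\sqrt n$; unfolding the recursion for $c_h$ with $\gamma\le1$ accumulates at most $H$ terms, each $O(|A|/(\Delta^h_{\min})^2)$, so $c_H=O\bigl(H|A|/\min_h(\Delta^h_{\min})^2\bigr)$, which is the claimed bound~\eqref{boundhidden}. The hypothesis that the $H$-horizon optimal policy is unique enters through the gaps: it guarantees $\Delta^h_{\min}>0$ at \emph{every} level, without which the factors $1/(\Delta^h_{\min})^2$ blow up and the bound is vacuous.

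I expect the main obstacle to be the non-stationary MAB lemma together with the propagation of constants through the $H$ levels. Tuning the level-dependent exponents and constants inside UCT-C's polynomial bonus --- the ``conditions on the parameters'' left implicit in the statement --- is a genuine balancing act: a larger bonus over-explores and destroys the $1/\sqrt n$ bias rate, while a smaller bonus cannot dominate the $O(1/\sqrt j)$ bias and the polynomial fluctuation injected from the level below, so the exponents at the various levels must be chosen jointly so that at every level the suboptimal-pull count stays $O(\sqrt n)$ and the concentration exponents remain positive through all $H$ compositions. Equally delicate is the coupling bookkeeping: each pull of arm $a$ at $y$ advances a \emph{random} internal clock (the visit count at $z(y,a)$), so the reward stream of that arm is only conditionally well-behaved with respect to a carefully chosen filtration, and the level-$(h-1)$ invariants must be shown to hold \emph{uniformly} over the possible clock values before they can be substituted into the lemma. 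It is also this polynomial-bonus compromise that pins the final rate at $1/\sqrt n$: forcing the bonus large enough to dominate the incoming bias makes each suboptimal arm get pulled on the order of $\sqrt n$ times, so the induced error $\Theta(\sqrt n/n)$ cannot beat the statistical $n^{-1/2}$ scale --- exactly the gap between~\eqref{boundhidden} and the $O(\ln n/n)$ of Theorem~\ref{thm:ucb1}.
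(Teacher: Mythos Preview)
Your proposal is essentially correct and follows the same approach that the paper describes. Note, however, that the paper does not itself prove this theorem: it is quoted from \cite{shah} (Theorem~1 there), and the paper only \emph{summarizes} the argument in the paragraph immediately preceding the statement --- namely, that Shah \emph{et al.} bound the errors level-by-level by emulating backward induction on Bellman's equation, feeding in convergence and polynomial-concentration properties of the non-stationary MAB at each visited state (their Sections~5 and~7). The explicit constant $H|A|/\min_h(\Delta^h_{\min})^2$ does not appear in the statement of \cite{shah}; the present paper extracts it after the theorem by ``re-examining'' Lemma~5 of \cite{shah} and tracking only the model-parameter contributions through the induction. Your two invariants (bias bound and polynomial concentration), your self-contained non-stationary MAB lemma, and your unrolling of the recursion $c_h\le\gamma\,c_{h-1}+O\bigl(|A|/(\Delta^h_{\min})^2\bigr)$ are exactly the skeleton of Shah \emph{et al.}'s Lemmas~5 and~6 that the paper alludes to, so there is no real divergence in approach --- your write-up is simply a good deal more detailed than the paper's two-sentence sketch.
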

\vspace{0.5cm}

It should be noted again that in contrast to the above result, the result for UCB1 in Theorem~(\ref{thm:ucb1}) holds 
for stochastic MDPs without any uniqueness assumption.
Lemma 5 and 6 in~\cite{shah} do not explicitly state the assumption that
the maximizer in the set of argmax is unique. It is imposed before the statement of
each lemma, respectively.
While the assumption of the uniqueness of an optimal arm has been usually made
in the best-arm identification for the MAB problems,
(see, e.g.,~\cite{bubeck}), putting a uniqueness assumption of an optimal
policy on the MDP model is \emph{not general}.
In fact, there exists conditions under which MDPs have a unique optimal policy (see, e.g.~\cite{cruz}).

Intuitively, in order for the average value of the samples to converge to the optimal value in a non-stationary MAB, an optimal arm should be played as often as possible and ``sufficiently more often" than non-optimal arms so that the samples from the optimal arm contributes to the average sufficiently and the samples from the non-optimal arms become negligible. 
Shah \emph{et al.}'s result basically means that as $n$ increases, the optimal arm gets \emph{identified} (in probability) by playing the arms according to the index-function of UCT-C (where the analysis of this part necessitates the uniqueness assumption) and the optimal arm is played polynomially more often than non-optimal arms (in contrast with the UCB1 case of ``exponentially more often") 
and eventually the average converges to the optimal value with the rate 
which coincides with the case of the law of large numbers, i.e., $O(1/\sqrt{n})$.
UCT-C is asymptotically optimal.

In the statement of Theorem 1 in~\cite{shah}, \emph{no constant factors are written
inside of the $O$-notation}. Lemma 6 in~\cite{shah} provides 
the parameter $\delta_n^{(h-1)}$
in terms of several factors, 
including $|A|$, a complexity parameter $\Delta_{\min}^{(h-1)}$ determined from the visited
states at level $h-1$ of the tree UCT-C has built, and some algorithmic constants of UCT-C
at level $h-1$ and $h$.
Even if some explicit recurrence relations among parameters between level $h$ and $h-1$, 
e.g., $\alpha^{(h)}, \eta^{(h)}$ are given, \emph{no recursive relation} between $\delta_n^{(h-1)}$ 
and $\delta_n^{(h)}$ is explicitly given.
If so in the paper, an inductive argument on $h$ would provide 
a bound on $\delta_n^{(0)}$. The bound on $\delta_n^{(0)}$ would play a critical role in bounding $V^*_H(x)$.

In order to
find a meaningful hidden constant-factor in $O(1/\sqrt{n})$, the result of 
Lemma 5 in~\cite{shah} 
can be re-examined while applying the inductive reasoning of Shah \emph{et al.}'s.
Considering only \emph{the errors contributed by the MDP model-parameters}
leads to having $H|A| / \min_{h}(\Delta_{\min}^{h})^2$ factor in the bound in our statement.

Now then from the result of UCB1 in Theorem~\ref{thm:ucb1},
we can state that UCB1 is asymptotically faster than UCT-C as a theorem below.
\begin{thm}
Suppose that the assumption in Theorem~\ref{thm:uct-c} holds. 
If $\{\mu^n\}$ and $\{\pi^n\}$ are the sequences of the policies generated by UCB1 and UCT-C, respectively, then the sequence of
$\bigl \{ | V^*_H(x) - E[n^{-1} \sum_{k=1}^{n} S^{\mu^k}_H \Bigr | X_0^{\mu^k} = x] | \bigr \}$
 converges to zero faster than 
$\bigl \{ | V^*_H(x) - E[n^{-1} \sum_{k=1}^{n} S^{\pi^k}_H \Bigr | X_0^{\pi^k} = x] | \bigr \}$
as $n\rightarrow \infty$.
\end{thm}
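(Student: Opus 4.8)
The plan is to reduce the claim to a purely asymptotic comparison of the two rate bounds established earlier. By Theorem~\ref{thm:ucb1}, applied with the fixed initial state $x$ and horizon $H$, the UCB1 error is bounded above, for all $n \geq |\Pi_H|$, by
\[
 0 \leq V^*_H(x) - E\left[\frac{1}{n}\sum_{k=1}^{n} S^{\mu^k}_H \biggr| X_0^{\mu^k}=x\right] \leq C_1 \frac{\ln n}{n},
\]
where $C_1 = C_1(x,H)$ is the finite constant $O\bigl(|A|^{|X|H}/\Delta_{\min}\bigr)$; this constant is well-defined and strictly positive precisely because the deterministic-MDP hypothesis of Theorem~\ref{thm:uct-c} guarantees a (unique) optimal policy, hence $\Delta_{\min}>0$. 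Simultaneously, Theorem~\ref{thm:uct-c} gives, under the very same hypotheses (deterministic MDP, unique optimal policy, the parameter conditions of UCT-C), a two-sided bound on the UCT-C error of the form
\[
 \Bigl|V^*_H(x) - E\Bigl[\frac{1}{n}\sum_{k=1}^{n} S^{\pi^k}_H \Bigr| X_0^{\pi^k}=x\Bigr]\Bigr| \leq C_2 \frac{1}{\sqrt{n}},
\]
with $C_2 = O\bigl(H|A|/\min_h(\Delta_{\min}^h)^2\bigr)$ finite.

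First I would make precise the meaning of ``converges to zero faster'': a sequence $\{a_n\}$ converges to zero faster than $\{b_n\}$ when $a_n \to 0$, $b_n\to 0$, and $a_n / b_n \to 0$ (equivalently $a_n = o(b_n)$). Denote by $e^{\UCB}_n$ and $e^{\UCT}_n$ the two nonnegative error sequences in the statement. From the UCB1 upper bound, $e^{\UCB}_n \leq C_1 \ln n / n$. If a matching \emph{lower} bound on $e^{\UCT}_n$ of the order $1/\sqrt n$ were available, the ratio would satisfy $e^{\UCB}_n/e^{\UCT}_n \leq (C_1/c_2)\,(\ln n/\sqrt n) \to 0$ and we would be done. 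However, Theorem~\ref{thm:uct-c} as quoted provides only an \emph{upper} bound $C_2/\sqrt n$ on $e^{\UCT}_n$, not a lower bound, so this direct route is not literally available from the cited results.

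The honest argument, therefore, is to interpret ``faster'' in the sense that the \emph{guaranteed} convergence rate for UCB1 dominates that for UCT-C, i.e. the best available upper bound on $e^{\UCB}_n$ is asymptotically negligible compared with the best available upper bound on $e^{\UCT}_n$. Concretely, set $f_n := C_1 \ln n / n$ and $g_n := C_2 / \sqrt n$; then $e^{\UCB}_n \leq f_n$, $e^{\UCT}_n \leq g_n$, both $f_n,g_n\to 0$, and
\[
 \frac{f_n}{g_n} = \frac{C_1}{C_2}\,\frac{\ln n}{\sqrt n} \;\longrightarrow\; 0 \quad\text{as } n\to\infty,
\]
since $\ln n = o(\sqrt n)$. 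Thus the order-$\ln n/n$ envelope of the UCB1 error vanishes strictly faster than the order-$1/\sqrt n$ envelope of the UCT-C error. The main obstacle, and the point at which the argument is necessarily about rate \emph{bounds} rather than exact asymptotics, is exactly this: without a lower bound on the UCT-C error one cannot claim $e^{\UCB}_n = o(e^{\UCT}_n)$ for the realized errors; the comparison is between the proven guarantees. (One may further remark, invoking the Lai--Robbins lower bound discussed after Theorem~\ref{thm:ucb1}, that no algorithm in the framework can beat $\Omega(\ln n/n)$, so UCB1's $O(\ln n/n)$ is rate-optimal, which is the strongest positive statement one can make here.) The routine verification $\ln n/\sqrt n \to 0$ and the finiteness of $C_1,C_2$ under the stated hypotheses complete the proof.
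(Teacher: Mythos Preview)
Your approach matches the paper's: both simply compare the $O(\ln n/n)$ upper bound from Theorem~\ref{thm:ucb1} against the $O(1/\sqrt{n})$ upper bound from Theorem~\ref{thm:uct-c} and invoke $\ln n/\sqrt{n}\to 0$. The paper adds only the minor observation that under the deterministic-MDP hypothesis the UCB1 constant sharpens from $|A|^{|X|H}$ to $(\min\{|A|,|X|\})^H$; your explicit caveat that only upper bounds are being compared---no lower bound on the UCT-C error is cited---is well taken and is in fact glossed over in the paper's own argument.
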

\begin{proof}
The proof is trivial. From the assumption, at every state maximally $|A|$ different states can 
be reached.
Therefore, the upper bound of UCB1 in~(\ref{ucb1}) becomes tighter by replacing $|A|$ by $\min\{|A|,|X|\}$ having the factor $(\min\{|A|,|X|\})^H$ instead of $|A|^{|X|H}$.
Furthermore, both $\Delta_{\min}$ and $\min_{h} (\Delta_{\min}^h)^2$ is $\Theta(1)$. Therefore, comparing the bound $O(\ln n /n)$ of UCB1 and $O(\sqrt{n}/n)$ of UCT-C,
for any given value of $|X|$, $|A|$, $|H|$, $\Delta_{\min}$, and $\Delta_{\min}^h, h\in \{1,...,H-1\}$, there exists a corresponding sufficiently large $k < \infty$ such that the absolute error made by UCB1 is smaller than that by UCT-C for all $n\geq k$.
\end{proof}

In Fig.~\ref{fig1}, we show a typical convergence behavior of the absolute error difference between UCB1 and UCT-C for a simple deterministic MDP with $|X|=10, |A|=2, H=15,$ and $\Delta_{\min} = \min_{h} (\Delta_{\min}^h)^2 = 0.1$.
When $n$ is relatively small, the effect of the large (exponential in $H$) constant factor of UCB1 compared with (polynomial in $H$) that of UCT-C is apparent.
As $n$ increases, the effect becomes gradually negligible and the difference becomes smaller and approaches to a cut point crossing the horizontal axis reaching the negative area.
\begin{figure*}[!tb] \centering
\includegraphics[height=10cm]{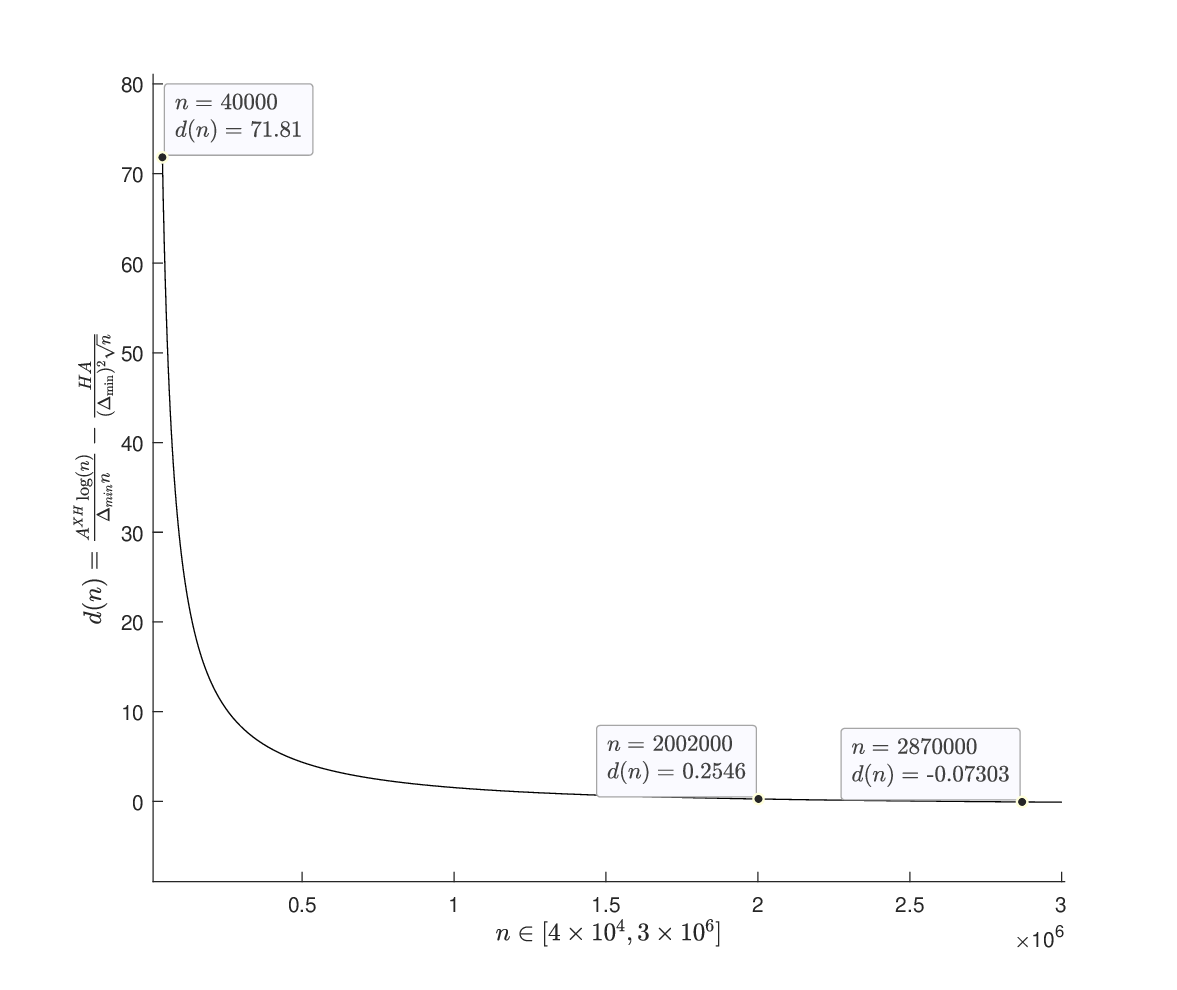}
\caption{Convergence behavior comparison of UCB1 and UCT-C error-estimates by the upper bound difference}
\label{fig1}
\end{figure*}

At this point, it should be clarified that we are \emph{not} claiming that UCB1 is a better choice than UCT-based MCTS 
when we actually consider implementing MCTS, in particular, with possible adjustments to the domains 
or adaptations into the problem characteristics.
Our goal is to point out the absence of theoretical supports of the empirical successes of UCT-based MCTS 
by comparing UCT-C with UCB1. More theoretical studies about UCT-based MCTS need to be done.

\section{The case of stochastic MDPs}

MCTS is supposed to overcome the curse of dimensionality problem in MDPs.
UCT's main design goal was this.
However, we have a substantial algorithmic problem that both UCT and UCT-C (or UCT-based MCTS in general)
need 
to update the value of the index-function \emph{whenever a state is visited} by rolling out
a policy.
Every visitable or reachable state from the initial (root) state needs to be added into
a tree-structure that each algorithm maintains, if visited for the first time, and whenever the state is
visited again, $Q^n$ and $V^n$ values need to be updated accordingly.
This makes the computational complexity depends on $|X|$.

A methodology of how to deal with stochastic MDPs is given for UCT-C 
by Shah \emph{et al.} in the appendix in~\cite{shah}. The key is to \emph{reduce} a stochastic MDP into an ``equivalent"
deterministic MDP and then to apply
the reasoning used for the deterministic case into the stochastic MDP for the
performance analysis.
(No complete analysis is provided there but it suffices for our message.)
The reduction idea is that
$Q^n(y,a,l)$ in~(\ref{QnUCT}) is changed into a \emph{weighted sum} of 
$V^n(z,l+1)$ with \emph{each possible next-state} $z$, with
weights being the empirical frequency of visiting each next-state thus far
and a polynomial bonus term with the same form as in~(\ref{phiUCT})
for each action is incorporated but with different algorithmic constants.
Shat \emph{et al.} showed that with this change, similar 
convergence and polynomial concentration properties (in probability) hold for each
non-stationary MAB associated with \emph{each state visited}.
By viewing then the children nodes associated with one action collectively as one 
``meta-node" corresponding to the action and applying the inductive reasoning used
for the deterministic MDP to the reduced MDP,
Shah \emph{et al.} claim that this leads to a convergence-rate of $O(1/\sqrt{n})$
for stochastic MDPs.

Most notably, the resulting UCT-C has a time-complexity that depends on $|X|$
besides $O(|X||A|H)$ space-complexity.
Because at the worst case, every state in $X$ can be visited at a state 
by taking an action (as long as the transition probability is positive), 
the time-complexity of updating $Q^n$ and $V^n$ with the weighted sum depends on $|X|$.
If a tree data-structure is used with the resulting UCT-C, the tree has 
$O((|A||X|)^H)$-size at the worst case.
In the deterministic setting, $|X|$-factor was \emph{one} so that 
the dependence on $|X|$ could be ignored.

The next issue is more critical. It is again related with hidden constant 
factors in the $O$-notation.
In the meta-node, each visited state from its parent node is 
associated with a nonstationary MAB whose contribution in $Q^n$ is estimated
by the frequency of visiting thus far so that the parent node needs to be
\emph{sufficiently visited often} to estimate the probability of transition.
However, in order to achieve a same degree of the error for each non-stationary 
MAB associated with each child, each child needs to be visited sufficiently often. 
For example, 
suppose that the true transition-probability to a child node is very small by taking an
action but has a very large (or compensable) optimal value at the child node.
In order for UCT-C to figure out this, the child node must be visited
sufficiently often from its parent node by taking that action and the parent node 
needs to visited sufficiently often.
This means that the value of the step $n$ to achieve the relative distance to $V^*_H(x)$
by $\epsilon>0$ in the
deterministic setting and that of $n$ to achieve the same error bound
$\epsilon$ in the stochastic setting must be \emph{very different} even if the rates of
$O(1/\sqrt{n})$ expressed with the big-$O$ are the same.
In other words, UCT-C achieves the desired error bound of $\epsilon$
in the stochastic setting at a ``sufficiently larger" step than in the deterministic setting.

Even if these side effects from the reduction are not rigorously explained in the paper,
it is obvious that the convergence rate of UCT-C in the stochastic setting should
include a non-negligible constant factor that reflects the transition
structure of MDP. For example, if there exists $\beta >0$ such that
$\inf\{P_{yz}^a | P_{yz}^a\neq 0, y,z\in X, a\in A\} \geq \beta$, the number of the next-states 
reachable from any state by taking any action is bounded by 
$\min\{|X|,\lfloor \beta^{-1} \rfloor\}$ (see, Appendix A~\cite{shah}). In this case,
the error bound would have the form of
\[
O\Bigl ( \min\{|X|,\lfloor \beta^{-1} \rfloor\} \cdot \frac{H|A|}{\min_{h} (\Delta_{\min}^h)^2} \frac{1}{\sqrt{n}} \Bigr ).
\]

It would be very likely that the convergence rate of UCT-C in the stochastic setting 
is much slower than in the deterministic setting and it depends on the state set size
in the worst case.
Because the bound of UCB1 in~(\ref{ucb1}) holds \emph{even for} stochastic MDPs,
UCB1's performance would become more competitive to that of UCT-C in terms of the convergence rate.

\section{Concluding Remarks}

The study in this note brings up a fundamental open-question whether it is possible to 
characterize in what conditions UCT-based MCTS works well for stochastic MDPs in general.
There is no theoretical back-up yet that can explain the empirical successes of UCT-based MCTS.

For example, one can consider the case where the size of the set of the
visitable states (with high probabilities) from the root initial state is 
relatively small and the states with low-probability reachabilities
have negligible optimal values. In other words, UCT-based MCTS need to 
\emph{control} somehow the number of the visitable states, or if possible, which 
state to visit.
It can then be speculated that because UCT or UCT-C can 
focus on visiting highly probable next-states, it might be effective.
Another point is 
that these algorithms do not incorporate Bellman's optimality principle
while updating the estimates. Combining this into the algorithms somehow can merit 
further investigation. A rigorous theoretical development is challenging.

However, non-stationarity does not disappear in this case too.
As long as the average of the samples from non-stationary MABs is used as the estimate
of the optimal value,
achieving an exponential concentration bound appears difficult as noted in~\cite{shah}.
This leads to a future (theoretical) topic of developing a variant of MCTS that 
can provide a faster convergence rate while still using the non-stationary MABs.
As we remarked before, the best asymptotic convergence-rate achievable
by an instance algorithm in our framework is $\Theta(\ln n/n)$.

Even if not formally stated anywhere, it can be argued that UCT is asymptotically optimal.
The UCB-like term in the index-function of UCT, as in the UCB term of UCB1, also
controls the frequency of playing each arm. Each arm is played infinitely often
due to the bonus term.
This property implies that while running UCT for a stochastic MDP, any visitable state 
from the root state is visited infinitely often and each arm is played infinitely 
often at every visited state. A similar inductive reasoning to UCT-C can be applied
in an asymptotic sense.
The main point here is about the convergence rate, not the convergence.

Finally, an experimental investigation that compares the performances of the algorithms in our framework to real applications would be interesting and support the theoretical comparative results.

\end{document}